\def\mymathfont{\mathbf}
\newcommand{\Q}{\mymathfont{Q}}
\newcommand{\R}{\mymathfont{R}}
\newcommand{\Z}{\mymathfont{Z}}
\newcommand{\CP}{\mathop{\mymathfont{C}\mathrm{P}}\nolimits}
\newcommand{\RP}{\mathop{\mymathfont{R}\mathrm{P}}\nolimits}
\renewcommand{\section}{\@startsection%
{section}% name
{1}% level
{0mm}% indent
{1.5\bigskipamount}% beforeskip
{0.5\bigskipamount}% afterskip
{\centering\normalsize\sc}}% style
\renewcommand{\paragraph}{\@startsection%
{paragraph}% name
{4}% level
{0mm}% indent
{\bigskipamount}% beforeskip
{-1.25ex}% afterskip
{\normalsize\sl}}% style
\def\provedboxcontents#1{$\square$}
\newtheoremstyle{thm}{}{}{\slshape}{}{\scshape}{.}{0.5em}{}
\newtheoremstyle{def}{}{}{}{}{\scshape}{.}{0.5em}{}
\newtheoremstyle{rmk}{}{}{}{}{\scshape}{.}{0.5em}{}
\newtheoremstyle{claim}{}{}{}{}{\slshape}{.}{0.5em}{}
\theoremstyle{thm}
\newtheorem{newstatement}{newstatement}
\newtheorem{theorem}[newstatement]{Theorem}
\newtheorem{corollary}[newstatement]{Corollary}
\newtheorem*{conjecture*}{Conjecture}
\theoremstyle{def}
\newcommand{\K}{K\"{a}hler}
\theoremstyle{rmk}
\theoremstyle{claim}
\let\expandafter\oldproof\csname\string\proof\endcsname
\let\oldendproof\endproof
\renewenvironment{proof}[1][\proofname]{%
  \oldproof[\slshape #1]%
}{\oldendproof}
\let\geq\geqslant
\let\leq\leqslant
\let\phi\varphi
\let\epsilon\varepsilon
\renewcommand{\emph}[1]{{\slshape #1}}
\renewcommand{\em}{\sl}
\title[Submanifolds of the projective space]{On codimension-1 submanifolds of the real and complex  projective space}
\author{Beniamino Cappelletti--Montano}
\address{Beniamino Cappelletti--Montano, Dipartimento di Matematica e Informatica \\
         U\-ni\-ver\-si\-t\`a di Cagliari, Italy.}
         \email{b.cappellettimontano@unica.it}
\author{Andrea Loi}
\address{Andrea Loi, Dipartimento di Matematica e Informatica \\
         Universit\`a di Cagliari, Italy.}
         \email{loi@unica.it}
\author{Daniele Zuddas}
\address{Daniele Zuddas, Korea Institute for Advanced Study, School of Mathematics, 85 Hoegiro, Dongdaemun-gu, Seoul 02455, Republic of Korea.}
\email{zuddas@kias.re.kr}
\date{}
\keywords{codimension-1 embeddings; real and complex projective spaces; complex and topological manifolds; Grassmannians}
\subjclass[2010]{32H02, 57N35}
\thanks{The first two authors were   supported by Prin 2015 -- Real and Complex Manifolds; Geometry, Topology and Harmonic Analysis -- Italy. All the authors are  members of INdAM-GNSAGA - Gruppo Nazionale per le Strutture Algebriche, Geometriche e le loro Applicazioni.}
\begin{document}
\begin{abstract}
Inspired by the analogous result in the algebraic setting (Theorem \ref{mainteor1}) we
show (Theorem \ref{mainteor2}) that the product  $M \times \RP^n$ of a closed and orientable topological manifold $M$ with the  $n$-dimensional real  projective space cannot be topologically locally flat embedded into  $\RP^{m + n + 1}$ for all even $n > m$.
\end{abstract}
\maketitle
\section{Introduction}
Since the early papers of Whitney (\cite{W36}), Hirsch (\cite{H59}) and Smale (\cite{S59}), the theory of embeddings focused on the question whether a smooth manifold can be immersed / embedded into a Euclidean space. In particular, there are a number of results concerning the best immersion or embedding of a projective space of a given dimension into a Euclidean space, and, on the other hand,  topological obstructions to such immersability in some dimensions have been found (see, for instance, \cite{AF13}, \cite{BB65}, \cite{DZ00}, \cite{R70}, \cite{S04}).

In last years the study of immersions has also fascinating interplays with other areas of mathematics. For instance, determining the smallest immersion dimension of a projective space into a Euclidean space is related to the study of robot motion planning \cite{FTY03}.

Very few is known, so far, about embeddings \emph{into} a projective space, a part of some results concerning the analytical and smooth embeddings of $\CP^{m}$  into  $\CP^{n}$ \cite{F65} and the set  of regular homotopy classes of codimension-1 immersions into $\RP^n$ \cite{B83}.

One of the aims of this paper is a contribution in this direction. Namely, we are interested in codimension-1 embeddings into a projective space. Our motivation is given by the following result, in the category of  complex manifolds  (see next section for the proof).

\begin{theorem}\label{mainteor1}
Let $M$ be a compact complex manifold of complex dimension $m \geq 1$.
There are no holomorphic embeddings of  $M \times \CP^n$ into $\CP^{m + n + 1}$ for $n>m$.
\end{theorem}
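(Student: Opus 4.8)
The plan is to assume that such a holomorphic embedding exists and to derive a contradiction by computing the second Betti number of $M\times\CP^n$ in two incompatible ways: via the Künneth formula, and via the Lefschetz hyperplane theorem applied to the image, which is a smooth hypersurface.

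So suppose $\iota\colon M\times\CP^n\hookrightarrow\CP^{m+n+1}$ is a holomorphic embedding; I would set $N=m+n+1$ and let $X=\iota(M\times\CP^n)$, a compact complex submanifold of $\CP^N$ of complex dimension $m+n$, hence of complex codimension $1$. The first step is to recognise $X$ as a smooth hypersurface. By Chow's theorem $X$ is a smooth projective subvariety of $\CP^N$; since $\CP^N$ is smooth with $\mathrm{Pic}(\CP^N)\cong\Z$ generated by $\mathcal O_{\CP^N}(1)$, the line bundle associated to the divisor $X$ is $\mathcal O_{\CP^N}(d)$ for some integer $d\ge 1$, so $X$ is the zero set of a section of the very ample bundle $\mathcal O_{\CP^N}(d)$; being a smooth hypersurface in $\CP^N$ with $N\ge 2$, it is connected, so $M$ is connected. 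Consequently the Lefschetz hyperplane theorem applies to the pair $(\CP^N,X)$.

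Now $n>m\ge 1$ forces $n\ge 2$, hence $\dim_{\C}X=m+n\ge 3$, and in particular $2<\dim_{\C}X$. By the Lefschetz hyperplane theorem the inclusion $X\hookrightarrow\CP^N$ induces an isomorphism $H^2(\CP^N;\Z)\to H^2(X;\Z)$, so $b_2(X)=b_2(\CP^N)=1$. On the other hand, the Künneth formula, together with $H^1(\CP^n;\Q)=0$ and $H^2(\CP^n;\Q)\cong\Q$, gives $H^2(M\times\CP^n;\Q)\cong H^2(M;\Q)\oplus\Q$, whence $b_2(M\times\CP^n)=b_2(M)+1$. Comparing the two, $b_2(M)=0$. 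But $X$ is projective, hence Kähler, and $M\cong M\times\{p\}$ is a closed complex submanifold of $X$, hence Kähler as well, the restriction of a Kähler form on $X$ being a Kähler form on $M\times\{p\}$; a compact Kähler manifold of complex dimension $m\ge 1$ has $b_2\ge 1$, since a Kähler class $[\omega]$ satisfies $[\omega]^m\ne 0$ in $H^{2m}$ and therefore $[\omega]\ne 0$ in $H^2$. This contradiction proves the theorem.

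The argument is short, and there is no serious obstacle; the step that needs the most care is the use of the Lefschetz hyperplane theorem in the correct range of degrees: it is precisely the hypothesis $n>m$, forcing $\dim_{\C}X\ge 3$, that places $H^2(X)$ strictly below the middle dimension of $X$, where Lefschetz yields an isomorphism rather than merely an injection. (For $m=n=1$ the statement is in fact false, as the smooth quadric surface $\CP^1\times\CP^1\subset\CP^3$ shows.) A secondary point to justify carefully is the passage from ``codimension-$1$ complex submanifold of $\CP^N$'' to ``degree-$d$ hypersurface'', which rests on Chow's theorem and the factoriality of $\CP^N$.
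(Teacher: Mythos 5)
Your argument is correct and follows essentially the same route as the paper: identify the image as a positive (ample) divisor in $\CP^{m+n+1}$ using that every divisor there is a multiple of the hyperplane class, apply the Lefschetz hyperplane theorem in degree $2$ (valid since $n>m\geq 1$ gives $\dim_{\C} X\geq 3$), compare with K\"unneth to force $b_2(M)=0$, and contradict this with the fact that $M$ is K\"ahler. Your additional remarks on Chow's theorem, connectedness, and the K\"ahler-class justification of $b_2(M)\geq 1$ only make explicit points the paper treats briefly.
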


Here we ask if  the previous theorem has a topological counterpart,  by taking $M$ any
closed, connected, orientable, topological manifold and by considering
nonorientable, i.e. even dimensional, real projective spaces instead of complex projective spaces.
The following theorem  is a result in this direction.

\begin{theorem}\label{mainteor2}
Let $M$ be a closed, connected, orientable, topological manifold of dimension $m \geq 1$.
There are no topologically locally flat embeddings of $M \times \RP^n$ into $\RP^{m + n + 1}$ for all even $n > m$.
\end{theorem}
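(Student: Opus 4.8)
My plan is to argue by contradiction, extracting a cohomological contradiction from the normal bundle of a putative locally flat embedding $f\colon M\times\RP^n\hookrightarrow\RP^{m+n+1}$ of even $n>m$. The key structural fact is that a closed locally flat codimension-$1$ submanifold $X$ of $\RP^{m+n+1}$ has a trivial normal bundle \emph{if and only if} it is two-sided, and $X$ is two-sided if and only if $[X]=0$ in $H_{m+n}(\RP^{m+n+1};\Z/2)$, i.e.\ the dual class in $H^1(\RP^{m+n+1};\Z/2)\cong\Z/2$ vanishes. Since $M\times\RP^n$ is nonorientable (as $n$ is even, $\RP^n$ is nonorientable and $M$ is orientable, so the product is nonorientable), it cannot be a two-sided hypersurface with a product orientable total space $\RP^{m+n+1}$ — wait, $\RP^{m+n+1}$ is orientable here since $m+n+1$ is odd. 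The first step, then, is: a two-sided closed hypersurface in an orientable manifold is orientable; since $M\times\RP^n$ is nonorientable, the embedded copy must be \emph{one-sided}, so its normal line bundle $\nu$ is the nontrivial one and $w_1(\nu)\neq 0$. By naturality, $w_1(\nu)$ is the restriction of the generator of $H^1(\RP^{m+n+1};\Z/2)$, hence $w_1(\nu)=f^*a$ where $a$ generates $H^1(\RP^{m+n+1};\Z/2)$; moreover $w_1(\nu)=w_1(M\times\RP^n)=1\otimes\alpha$ where $\alpha\in H^1(\RP^n;\Z/2)$ is the generator.

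The second step is to exploit the Whitney sum formula $f^*T\RP^{m+n+1}\cong T(M\times\RP^n)\oplus\nu$. Pulling back the total Stiefel–Whitney class, $f^*\bigl((1+a)^{m+n+2}\bigr)=w(M\times\RP^n)\cdot(1+w_1(\nu))$. Using the Künneth theorem $H^*(M\times\RP^n;\Z/2)\cong H^*(M;\Z/2)\otimes H^*(\RP^n;\Z/2)$ and $H^*(\RP^n;\Z/2)=\Z/2[\alpha]/(\alpha^{n+1})$, and the fact that $w(\RP^n)=(1+\alpha)^{n+1}$, I would write $w(M\times\RP^n)=w(M)\otimes(1+\alpha)^{n+1}$ and $w_1(\nu)=1\otimes\alpha$, so the right-hand side becomes $w(M)\otimes(1+\alpha)^{n+2}$. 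On the other hand $f^*a=1\otimes\alpha$ forces $f^*\bigl((1+a)^{m+n+2}\bigr)=1\otimes(1+\alpha)^{m+n+2}$, which lies in the subring $1\otimes H^*(\RP^n;\Z/2)$. Comparing, $w(M)\otimes(1+\alpha)^{n+2}=1\otimes(1+\alpha)^{m+n+2}$ in $H^*(M;\Z/2)\otimes\Z/2[\alpha]/(\alpha^{n+1})$. Since $(1+\alpha)^{n+2}$ is a unit in $\Z/2[\alpha]/(\alpha^{n+1})$ (its constant term is $1$), this identity determines $w(M)\otimes 1=1\otimes(1+\alpha)^{m}$ after dividing — but the left side lies in $H^*(M;\Z/2)\otimes 1$ while the right side has a nonzero $\alpha$-component as soon as $m\ge 1$ and $m\le n$ (so that $\binom{m}{k}\alpha^k$ survives modulo $\alpha^{n+1}$, and $\binom{m}{1}=m$; at least the degree-$1$ term $m\,\alpha$ is present when $m$ is odd, and more care is needed when $m$ is even). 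This degree/support contradiction is the crux.

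The main obstacle I anticipate is making the final comparison clean when $m$ is even, since then $\binom{m}{1}=m\equiv 0$ and the lowest surviving term of $(1+\alpha)^m$ modulo $2$ is $\alpha^{2^t}$ where $2^t$ is the largest power of $2$ dividing out of $m$'s binary expansion — but in any case $(1+\alpha)^m\neq 1$ in $\Z/2[\alpha]$ for $m\ge 1$, and since $m\le n<n+1$ the offending term $\binom{m}{k}\alpha^k$ with $k\le m\le n$ is not killed by the truncation $\alpha^{n+1}=0$. So the identity $w(M)\otimes 1 = 1\otimes(1+\alpha)^m$ is impossible because the right-hand side is not concentrated in $\alpha$-degree $0$, whereas the left-hand side is; this is where the hypothesis $n>m$ (guaranteeing no truncation saves us) and the parity of $n$ (guaranteeing one-sidedness, hence $w_1(\nu)\neq0$) are both essential. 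A secondary technical point to verify carefully is the claimed equivalence between one-sidedness, nontriviality of the normal line bundle, and nonvanishing of the $\Z/2$ fundamental class in the ambient space — this is standard for locally flat submanifolds (it follows from the tubular-neighborhood theorem in the topological locally flat category, or from Alexander duality plus the fact that a regular neighborhood of a locally flat hypersurface is an $I$- or twisted-$I$-bundle), and I would cite it rather than reprove it.
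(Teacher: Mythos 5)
Your strategy (one-sidedness, the normal line bundle, and the Whitney sum formula leading to the comparison $w(M)\otimes 1=1\otimes(1+\alpha)^{m}$) is genuinely different from the paper's, and for \emph{even} $m$ the computation does produce a contradiction. But there is a concrete gap: the claim that ``$\RP^{m+n+1}$ is orientable since $m+n+1$ is odd'' is false for odd $m$. Since $n$ is even, $m+n+1\equiv m+1 \pmod 2$, so the ambient projective space is orientable exactly when $m$ is even. For odd $m$ every step hung on that claim breaks: you cannot conclude that the nonorientable hypersurface is one-sided, and the identities $w_1(\nu)=w_1(M\times\RP^n)$ and $w_1(\nu)=f^*a$ are unjustified, because the degree-one Whitney relation now reads $w_1(\nu)=f^*a+1\otimes\alpha$ (as $w_1(\RP^{m+n+1})=a$). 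In fact the forced conclusion is the opposite one: since $w_1(\nu)=f^*\bigl(\mathrm{PD}_{\Z_2}[N]\bigr)$ is either $0$ or $f^*a$, the option $w_1(\nu)=f^*a$ would give $1\otimes\alpha=0$, so for odd $m$ the hypersurface must be two-sided, with $w_1(\nu)=0$ and $f^*a=1\otimes\alpha$; the comparison then becomes $w(M)\otimes 1=1\otimes(1+\alpha)^{m+1}$, which still contradicts because $n\ge m+1$ keeps the term $\alpha^{m+1}$ alive. So the approach is salvageable, but as written it only covers even $m$; the correct organization is a case split on $\mathrm{PD}_{\Z_2}[N]\in H^1(\RP^{m+n+1};\Z_2)$ (equivalently on one- versus two-sidedness), not an appeal to orientability of the ambient space. (Also, two-sidedness is equivalent to the vanishing of the \emph{restriction} $f^*(\mathrm{PD}_{\Z_2}[N])$, not of $[N]$ itself, as you loosely state at the outset.)

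A second, smaller issue: the theorem concerns topological manifolds and locally flat embeddings, while your argument uses tangent bundles, Stiefel--Whitney classes of $M\times\RP^n$, and the Whitney sum formula. These do survive in the topological category (tangent microbundles, Stiefel--Whitney classes of topological bundles, and the existence of a normal line bundle for a locally flat hypersurface), but that machinery must be invoked explicitly; as written your argument most directly proves only the smooth corollary. The paper's proof avoids characteristic classes of tangent bundles altogether: it lifts $N=g(M\times\RP^n)$ to the universal cover $S^{m+n+1}$, where a closed locally flat hypersurface bounds and is therefore orientable, uses the resulting two-fold cover of $N$ and its monodromy to show that $\pi_1\bigl(g(\{y\}\times\RP^n)\bigr)\to\pi_1(\RP^{m+n+1})$ is an isomorphism, notes that $[g(\{y\}\times\RP^n)]=0$ in $H_n(\RP^{m+n+1};\Z_2)$ since it is disjoint from a parallel copy while the standard $\RP^n$ has nonzero self-intersection when $n>m$, and concludes from $i^*(\eta)^n\neq 0=i^*(\eta^n)$. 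Once repaired, your route gives a uniform characteristic-class proof (and works verbatim in the smooth setting); the paper's route buys elementarity in the locally flat topological setting.
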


The paper is organized as follows. In the next section we  prove Theorem \ref{mainteor1} and Theorem \ref{mainteor2}. In the last one we show, by some counterexamples, that the above results do not hold by taking  immersions instead of embeddings, and that the assumptions on the dimension of $M$ can not be relaxed. Of course the Segre embedding $\CP^1 \times \CP^1 \to \CP^3$ provides an example of codimension-1 embedding between projective spaces with $m=n$, according to the notation of Theorem \ref{mainteor1}. However, in view of our results, one can ask if it exhausts all the embeddings of the product of projective spaces into a projective space. This will lead us to formulate a general conjecture on embeddings of Grassmannians.

\section{Proof of  the main results}
In order to prove Theorem \ref{mainteor1},
we recall some basic facts of algebraic geometry, referring the reader to Section 1 and 2 of Chapter 1 in  \cite{gh} for details and further results.

Let $X$ be a complex manifold of complex dimension $k$. Any co\-di\-men\-sion-1 complex submanifold $V\subset X$
can be seen as a smooth divisor on $X$. Recall that a divisor $D$ of $X$ is a locally finite formal linear combination $D=\sum a_jV_j$,
where the $V_j$ are irreducible holomorphic  subvarieties  of complex  dimension $n-1$ (equivalently, $V_j$ can be described locally as zeros of a single holomorphic function).
To each divisor $D$ on $X$ (and hence to  any codimension-1 complex submanifold $V\subset X$) we can associate a holomorphic line bundle $L=[V]$ on $X$ (see \cite[p.132]{gh}).
A holomorphic line bundle $L$ over a compact complex manifold $X$ is said to be {\em positive} if there exists an integral \K\ form $\omega$ on $X$ representing the first Chern class of $L$,
i.e. $c_1(L)=[\omega]$, where $[\omega]\in H_{dR}^2(X; \R)$ denotes the second de Rham cohomology class of $\omega$.
One has the following celebrated theorem.

\vskip 0.3cm

\noindent
{\bf Lefschetz Hyperplane Theorem.}
{\em Let $X$ be a  compact complex manifold of complex dimension $k$ and let $V$ be a  codimension-1 complex submanifold of $X$ such that the associated line bundle $[V]$ is positive.
Then, the linear map
$$H^q(X; \Q)\rightarrow H^q(V; \Q)$$
induced by the inclusion $V\hookrightarrow X$ is an isomorphism for $q\leq k-2$ and injective for $q=k-1$.}

We are now ready to prove our main theorems.

\begin{proof}[Proof of Theorem \ref{mainteor1}]
The proof is by contradiction. Suppose that there is a holomorphic embedding $\varphi \colon M \times \CP^n \to \CP^{m+n+1}$.
Since all the divisors of $\CP^{m+n+1}$ are multiple of the hyperplane divisor $H=\CP^{m+n}\subset \CP^{m+n+1}$,
it follows that the holomorphic line bundle $[V]$ associated to the  complex submanifold $V=\varphi (M\times \CP^n)\subset\CP^{m+n+1}$ is positive.
The assumptions $m\geq 1$ and $n>m$ together with the Lefschetz hyperplane theorem for $q=2$, yields the equality 
$$b_2(M\times \CP^n) = b_2(\CP^{m+n+1})$$
among the second Betti numbers.
Using  K\"{u}nneth's theorem and  the fact that the second Betti number of the complex projective space is 1, we get
$$1+b_2(M) = b_2(M\times \CP^n) = b_2(\CP^{m+n+1})=1,$$
yielding the desired contradiction since $b_2(M)\neq 0$ being $M$ an algebraic manifold.
\end{proof}
\begin{proof}[Proof of Theorem \ref{mainteor2}]
Suppose, by contradiction,  that there is a topologically locally flat embedding $g \colon M \times \RP^n \to \RP^{m+n+1}$. We put $N = g(M \times \RP^n)$ and $P_y = g(\{y\} \times \RP^n)$, where $y\in M$ is fixed. Since $g$ is an embedding, we have $N \cong M \times \RP^n$ and $P_y \cong \RP^n$. Let $p \colon S^{m+n+1} \to \RP^{m+n+1}$ be the universal covering map.

The preimage $\widetilde N = p^{-1}(N)$, being a closed codimension-1 locally flat submanifold of $S^{m+n+1}$, is the boundary of a codimension-0 submanifold $U \subset S^{m+n+1}$. Therefore, $\widetilde N$ is orientable, and since $M\times \RP^n$ is nonorientable because $n$ is even, it follows that $\widetilde N$ is connected.
We claim that the homomorphism
$$i_* \colon \pi_1(P_y) \to \pi_1(\RP^{m+n+1}) \cong \Z_2,$$
induced by the inclusion $i \colon P_y \to \RP^{m+n+1}$, is an isomorphism.

To prove the claim, we consider the monodromy $\omega \colon \pi_1(N) \to \Sigma_2$ of the covering map given by the restriction $p_| \colon \widetilde N \to N$, where $\Sigma_2$ is the symmetric group of two elements. Consider also the monodromy $\tilde \omega \colon \pi_1(\RP^{m+n+1}) \to \Sigma_2$ of the covering map $p$. It is clear that $\omega = \tilde\omega \circ j_*$, where $j_* \colon \pi_1(N) \to \pi_1(\RP^{m+n+1})$ is the homomorphism induced by the inclusion $j \colon N \to \RP^{m+n+1}$.

Let $a \in \pi_1(M)$ be any element, and let $b \in \pi_1(\RP^n) \cong \Z_2$ be the generator.
A loop in $\RP^n$ that represents $b$ is orientation-reversing (which means that the parallel transport along this loop reverses the orientation of the tangent space at the base point, or, equivalently, that the tubular neighborhood of this loop in $\RP^n$ is a nonorientable manifold). On the other hand, since $M$ is orientable, every loop in $M$ is orientation-preserving (that is, its tubular neighborhood is orientable). Hence, a loop $\lambda$ in $N$ that represents $(a, b) \in \pi_1(M) \times \pi_1(\RP^n) \cong \pi_1(N)$ is orientation-reversing.

Since $\widetilde N$ is orientable, every lifting $\tilde \lambda$ of $\lambda$ to $\widetilde N$ cannot be a loop (otherwise the tubular neighborhood of $\tilde\lambda$ in $\widetilde N$ would map homeomorphically to the tubular neighborhood of $\lambda$ by $p_{|\widetilde N}$, which is impossible because of the nonorientability of the latter). This implies that $\omega([\lambda]) = \omega(a, b)$ is not the identity element of $\Sigma_2$. In particular, for $a = 1$, $\omega(1, b) = \tilde\omega(j_*(1,b)) = \tilde\omega(i_*(b)) \ne 1$, and so $i_*(b) \ne 1$. Since $\pi_1(P_y) \cong \Z_2$, this proves the claim.

Now, we proceed with the proof of the theorem. The homology class of $P_y$ in $$H_{n}(\RP^{m+n+1}; \Z_2) \cong \Z_2$$ is equal to zero because $P_y$ is disjoint from an isotopic copy of it, say $P_{y'}$ for $y' \neq y$, while the generator of $H_{n}(\RP^{m+n+1}; \Z_2)$, that is the homology class of the standard $\RP^{n} \subset \RP^{m+n+1}$, has non-zero self-intersection, given that $n > m$.

Let $\eta \in H^1(\RP^{m+n+1}; \Z_2) \cong \Z_2$ be the generator. The fact that $i_*$ is an isomorphism between fundamental groups implies that $i^*(\eta) \neq 0$ in $H^1(P_y; \Z_2)$. Thus, $i^*(\eta)^{n} \neq 0$ in $H^{n}(P_y; \Z_2)$. On the other hand, $i^*(\eta)^{n} = i^*(\eta^{n}) = 0$ because $\langle \eta^n, [P_y]\rangle = 0$, being $[P_y] = 0$ in $H_{n}(\RP^{m+n+1}; \Z_2)$. Having obtained a contradiction, we conclude the proof.
\end{proof}

\begin{corollary}
There are no smooth embeddings of $M \times \RP^{n}$ into $\RP^{m+n+1}$ for every smooth, closed, orientable manifold $M$ of dimension $m \geq 1$ and for every even $n > m$.
\end{corollary}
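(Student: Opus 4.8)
The plan is to deduce this immediately from Theorem \ref{mainteor2} by observing that a smooth embedding is, in particular, a topologically locally flat embedding. First I would recall that any smooth manifold underlies a topological manifold, and that a smooth, closed, orientable manifold $M$ of dimension $m$ is in particular a closed, connected (after restricting to a component, or noting the statement is vacuous otherwise), orientable topological manifold of dimension $m \geq 1$; the hypotheses of Theorem \ref{mainteor2} are therefore met.

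Next I would argue local flatness. Given a smooth embedding $g \colon M \times \RP^n \to \RP^{m+n+1}$, the tubular neighborhood theorem provides a smooth (hence topological) normal disk bundle of the image, and in particular a homeomorphism of a neighborhood of each image point onto $\R^{m+n} \times \R$ carrying the image to $\R^{m+n} \times \{0\}$. Thus $g$ is a topologically locally flat embedding of $M \times \RP^n$ into $\RP^{m+n+1}$.

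Finally, I would invoke Theorem \ref{mainteor2} with the same $M$ and the same even $n > m$: it asserts that no such topologically locally flat embedding exists, contradicting the assumed existence of $g$. Hence there is no smooth embedding either.

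There is no real obstacle here: the only point requiring (routine) care is the passage from ``smooth embedding'' to ``topologically locally flat embedding,'' which is a standard consequence of the existence of tubular neighborhoods in the smooth category. Everything else is a verbatim application of the already-proved Theorem \ref{mainteor2}.
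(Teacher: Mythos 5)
Your proposal is correct and is exactly the intended deduction: the paper states the corollary without proof precisely because a smooth embedding is topologically locally flat (by the tubular neighborhood theorem), so Theorem \ref{mainteor2} applies verbatim. Your handling of connectedness by restricting to a component of $M$ is the right routine fix.
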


\section{Final remarks}

\begin{enumerate}[1.]

\item The assumption $n>m$ in Theorem \ref{mainteor1} cannot be  relaxed.
For example, $\CP^1\times\CP^1$ admits a  holomorphic embedding into  $\CP^3$
by means of the Segre embedding, namely the map
\begin{align*}
\CP^1 \times \CP^1 &\to \CP^3\\ ([z_0, z_1], [w_0, w_1]) &\mapsto [z_0w_0, z_0w_1, z_1w_0, z_1w_1].
\end{align*}
Nevertheless, we do not know if the hypothesis $n>m$ in Theorem \ref{mainteor2} (still assuming $n$ even) can be dropped (cf. the proof given in the previous section).
\medskip

\item Let us consider a smooth embedding of $S^1\times \RP^3$ into $\RP^5$ constructed as follows. First, take an embedding $\psi\colon\RP^3\rightarrow\R^5$, whose existence is guaranteed by Wall's theorem \cite{W65}; then consider a compact tubular neighborhood
$T$ of $\psi (\RP^3)$ in $\R^5$. Since $\RP^3$ is orientable and the embedding is of codimension 2, it is well known \cite[Chapter 11]{MS74} that the normal bundle of $\RP^3$ in $\R^5$ is trivial, hence $T\cong D^2\times \RP^3$, where $D^2$  denotes the $2$-dimensional disk. By taking the boundary of $T$, we then get an embedding of $S^1\times \RP^3$ into $\R^5$. Finally, by composing this embedding  with the inclusion of $\R^5$ into $\RP^5$  as an affine chart one gets a smooth embedding $S^1\times \RP^3\rightarrow \RP^5$. Similarly, starting from a suitable embedding of $\RP^3$ into $\R^6$ (obtained by composing the previous one with the standard inclusion $\R^5 \subset \R^6$), one can show that there exists a smooth embedding $S^2\times \RP^3\rightarrow \RP^6$.
These constructions show that the evenness of $n$ in Theorem \ref{mainteor2} cannot be avoided.
\medskip

\item The assumption in Theorem \ref{mainteor2} cannot be weakened by considering  immersions instead of embeddings. Indeed,
$S^1\times \RP^2$ (actually $S^1\times S$ for any closed surface $S$) admits a smooth immersion into $\RP^4$ constructed as follows. Take an embedding $\psi \colon \RP^2\hookrightarrow \R^4$
and a $2$-dimensional plane $H$ disjoint from $\psi (\RP^2)$;  then, the rotation of $\psi (\RP^2)$ around $H$ generates an immersed copy of $S^1\times \RP^2$. By composing this immersion with the inclusion of $\R^4$ into $\RP^4$  as an affine chart, one gets the desired immersion.
\medskip

\end{enumerate}

Notice that the  proof of Theorem \ref{mainteor1} easily extends  by taking complex Grassmannians instead of projective spaces. Thus, we believe that Theorem \ref{mainteor2} can be extended to nonorientable Grassmannians instead of projective spaces.

Actually, in view of this last observation and the first remark in this section, it makes sense to formulate the following more general conjecture.

\begin{conjecture*}
Let $G_{k_i}(\R^{n_i})$ denote the Grassmannian of $k_i$-planes in $\R^{n_i}$, for $k_i \geq 1$ and $i\in\left\{1,2,3\right\}$. Then, the only codimension-$1$ smooth embedding
\begin{equation*}
G_{k_1}(\R^{n_1})\times  G_{k_2}(\R^{n_2}) \rightarrow G_{k_3}(\R^{n_3})
\end{equation*}
is the Segre embedding
\begin{equation*}
\RP^1 \times \RP^1 \rightarrow \RP^3.
\end{equation*}
\end{conjecture*}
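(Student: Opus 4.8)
\medskip

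We close by sketching the strategy we would follow to attack the conjecture, and by indicating where the essential difficulty lies.

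The plan is to imitate, as far as possible, the proofs of Theorems \ref{mainteor1} and \ref{mainteor2}. One may assume $1 \le k_i \le n_i - 1$ for $i = 1, 2, 3$, the remaining cases being trivial; writing $d_i = k_i(n_i - k_i) = \dim G_{k_i}(\R^{n_i}) \ge 1$, a codimension-$1$ embedding forces $d_1 + d_2 + 1 = d_3$, so $d_3 \ge 3$. In the borderline case $d_3 = 3$ one is forced to $d_1 = d_2 = 1$, hence $G_{k_1}(\R^{n_1}) = G_{k_2}(\R^{n_2}) = \RP^1$, and, the only three-dimensional real Grassmannian being $\RP^3$, to $G_{k_3}(\R^{n_3}) = \RP^3$ — exactly the Segre situation. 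So the task reduces to excluding every target of dimension $\ge 4$, and the argument should split according to orientability, recalling that $G_k(\R^n)$ is orientable if and only if $n$ is even.

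\emph{The nonorientable case.} Suppose some factor, say $G_{k_2}(\R^{n_2})$, is nonorientable, i.e.\ $n_2$ odd. Then the source is nonorientable and I would run the proof of Theorem \ref{mainteor2} with the universal covering $S^{m+n+1} \to \RP^{m+n+1}$ replaced by the double covering $p \colon \widetilde G_{k_3}(\R^{n_3}) \to G_{k_3}(\R^{n_3})$ by the oriented Grassmannian; this is legitimate because $\pi_1(G_{k_3}(\R^{n_3})) \cong \Z_2$ and $\widetilde G_{k_3}(\R^{n_3})$ is simply connected, hence orientable with vanishing first $\Z_2$-cohomology. As there, the preimage of the embedded submanifold is a $\Z_2$-null-homologous, hence two-sided, separating and orientable, closed hypersurface; nonorientability of the source makes it connected; and the monodromy computation shows that the generator $\eta_3$ of $H^1(G_{k_3}(\R^{n_3}); \Z_2) \cong \Z_2$ restricts nontrivially to a slice $\{y\} \times G_{k_2}(\R^{n_2})$, i.e.\ $i^*\eta_3 = w_1(\gamma_{k_2})$, where $\gamma_{k_2}$ is the tautological bundle.

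\emph{The obstacle.} It remains to derive a contradiction. On $\RP^{m+n+1}$ this was immediate: $\eta_3$ generates a truncated polynomial ring, a slice is displaceable hence $\Z_2$-null-homologous (the only nonzero class in its degree has nonzero self-intersection once $n > m$), and pairing the displaceable slice against $\eta_3^{\,n}$ contradicts $(i^*\eta_3)^{n} \ne 0$ in $H^n(\RP^n; \Z_2)$. For a general target two genuine obstacles appear. First, $w_1(\gamma_{k_3})^{d_2}$ need not generate the top cohomology of the slice — already $w_1(\gamma_2)^4 = 0$ in $H^4(G_2(\R^4); \Z_2)$ — so one must produce a class $\theta \in H^{d_2}(G_{k_3}(\R^{n_3}); \Z_2)$ lying in the image of $i^*$ and restricting to the fundamental class of the slice, which requires control of the image of $i^*$ in all degrees, hence of the Schubert-calculus structure of $H^*(G_{k_3}(\R^{n_3}); \Z_2) = \Z_2[w_1, \dots, w_{k_3}]/(\bar{w}_{n_3 - k_3 + 1}, \dots, \bar{w}_{n_3})$. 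Second, one must check that such a $\theta$ pairs to zero with the displaceable slice, which is no longer automatic, since a class can a priori be displaceable without being null-homologous. Reducing both points to an arithmetic condition on the $k_i$ and $n_i$ — presumably via Poincaré duality on $\widetilde G_{k_3}(\R^{n_3})$ and the mod-$2$ Schubert basis — is the crux, and the step I expect to be genuinely hard.

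\emph{The orientable case.} When both factors are orientable — the regime containing the exceptional Segre embedding, where all the $n_i$ are even — the double-cover argument loses its force, and I would instead look for a Betti-number obstruction in the spirit of Theorem \ref{mainteor1}. In the holomorphic category this already works for complex Grassmannians: a codimension-$1$ complex submanifold is a smooth effective divisor, hence positive because $\mathrm{Pic}(G_{k_3}(\C^{n_3})) \cong \Z$ is generated by an ample class, so the Lefschetz hyperplane theorem forces the source to have $b_2 = b_2(G_{k_3}(\C^{n_3})) = 1$ as soon as $\dim_{\C} G_{k_3}(\C^{n_3}) \ge 4$, whereas a product of two positive-dimensional complex Grassmannians has $b_2 \ge 2$; the remaining targets $\CP^1, \CP^2, \CP^3$ collapse to the Segre embedding. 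So the complex analogue of the conjecture is already a theorem, and the real outstanding gap is a smooth substitute for Lefschetz — for instance through the homotopy type of the complement of the embedded submanifold together with an Alexander-type duality inside $G_{k_3}(\R^{n_3})$.
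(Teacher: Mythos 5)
The statement you are addressing is not proved in the paper at all: it is stated as an open conjecture, precisely because the authors' methods do not settle it. Your submission, by your own account, is a strategy sketch rather than a proof, and the places where it stops are exactly the places where a proof would have to begin. In the nonorientable case you correctly observe that the double-cover argument of Theorem \ref{mainteor2} transplants to the oriented Grassmannian cover up to the point where the paper used the truncated polynomial ring $H^*(\RP^{m+n+1};\Z_2)$ (and the inequality $n>m$) to pair $\eta^n$ against the displaceable slice; your example $w_1(\gamma_2)^3=0$ in $H^*(G_2(\R^4);\Z_2)$ shows why that step genuinely fails verbatim, but the replacement class $\theta$, the control of the image of $i^*$ via Schubert calculus, and the vanishing of its pairing with the displaced slice are all left as hopes, not arguments. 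In the orientable case you have no argument at all: the appeal to a ``smooth substitute for Lefschetz'' is a wish, and the holomorphic discussion of complex Grassmannians, while essentially the extension of Theorem \ref{mainteor1} that the paper itself mentions, says nothing about smooth embeddings of real oriented Grassmannians, which is the regime containing the exceptional case.

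Two further points deserve attention. First, your reduction of the borderline case $d_3=3$ only identifies the dimensions $(\RP^1\times\RP^1,\RP^3)$; the conjecture asserts that the Segre embedding is the \emph{only} embedding, and dimension counting does not address this uniqueness claim (a smooth torus in $\RP^3$ can, for instance, bound a knotted solid torus inside an affine chart, so one must at least fix the intended equivalence --- existence of the pair, or uniqueness up to isotopy/automorphism --- before the borderline case can be declared ``exactly the Segre situation''). Second, since the paper offers no proof, the honest assessment is that your proposal reproduces the heuristics that led the authors to the conjecture (Theorem \ref{mainteor1} extends to complex Grassmannians, Theorem \ref{mainteor2} suggests a nonorientable obstruction) but does not close the gap between those heuristics and the statement; as it stands it cannot be accepted as a proof of the conjecture.
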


\end{document}